\begin{document}

\font\eightrm=cmr8
\font\eightit=cmti8
\font\eighttt=cmtt8
\def\tci
{\hbox{\hskip1.8pt$\rightarrow$\hskip-11.5pt$^{^{C^\infty}}$\hskip-1.3pt}}
\def\nft
{\hbox{$n$\hskip3pt$\equiv$\hskip4pt$5$\hskip4.4pt$($mod\hskip2pt$3)$}}
\def\bbR{\mathrm{I\!R}}
\def\rto{\bbR\hskip-.5pt^2}
\def\rtr{\bbR\hskip-.7pt^3}
\def\rfo{\bbR\hskip-.7pt^4}
\def\rn{\bbR^{\hskip-.6ptn}}
\def\mr{\bbR^{\hskip-.6ptm}}
\def\bbZ{\mathsf{Z\hskip-4ptZ}}
\def\bbRP{\text{\bf R}\text{\rm P}}
\def\bbC{\text{\bf C}}
\def\hyp{\hskip.5pt\vbox
{\hbox{\vrule width3ptheight0.5ptdepth0pt}\vskip2.2pt}\hskip.5pt}
\def\er{r}
\def\es{s}
\def\df{d\hskip-.8ptf}
\def\fv{\mathcal{F}}
\def\fvp{\fv_{\nrmh p}}
\def\wv{\mathcal{W}}
\def\vt{\mathcal{P}}
\def\tv{\mathcal{T}}
\def\vtx{\vt_{\nh x}}
\def\fh{f}
\def\rc{\theta}
\def\jm{\mathcal{I}}
\def\ke{\mathcal{K}}
\def\xc{\mathcal{X}_c}
\def\lz{\mathcal{L}}
\def\Lie{\pounds}
\def\lv{\Lie\hskip-1.2pt_v\w}
\def\lo{\lz_0}
\def\xe{\mathcal{E}}
\def\eo{\xe_0}
\def\hm{\hskip1.9pt\widehat{\hskip-1.9ptM\hskip-.2pt}\hskip.2pt}
\def\hmt{\hskip1.9pt\widehat{\hskip-1.9ptM\hskip-.5pt}_t}
\def\hmz{\hskip1.9pt\widehat{\hskip-1.9ptM\hskip-.5pt}_0}
\def\hmp{\hskip1.9pt\widehat{\hskip-1.9ptM\hskip-.5pt}_p}
\def\hg{\hskip1.2pt\widehat{\hskip-1.2ptg\hskip-.4pt}\hskip.4pt}
\def\hk{\hskip1.5pt\widehat{\hskip-1.5ptK\hskip-.5pt}\hskip.5pt}
\def\hq{\hskip1.5pt\widehat{\hskip-1.5ptQ\hskip-.5pt}\hskip.5pt}
\def\q{q}
\def\bq{\hat q}
\def\p{p}
\def\w{^{\phantom i}}
\def\x{v}
\def\y{y}
\def\vp{\vt^\perp}
\def\vd{\vt\hh'}
\def\vdx{\vd{}\hskip-4.5pt_x}
\def\bz{b\hh}
\def\fe{F}
\def\fy{\phi}
\def\vl{\Lambda}
\def\hy{\mathcal{V}}
\def\vh{h}
\def\mv{V}
\def\vo{V_{\nnh0}}
\def\ao{A_0}
\def\bo{B_0}
\def\uv{\mathcal{U}}
\def\sv{\mathcal{S}}
\def\svp{\sv_p}
\def\xv{\mathcal{X}}
\def\xvp{\xv_p}
\def\yv{\mathcal{Y}}
\def\yvp{\yv_p}
\def\zv{\mathcal{Z}}
\def\zvp{\zv_p}
\def\cv{\mathcal{C}}
\def\dy{\mathcal{D}}
\def\nv{\mathcal{N}}
\def\iv{\mathcal{I}}
\def\gkp{\Sigma}
\def\ret{\pi}
\def\hs{\hskip.7pt}
\def\hh{\hskip.4pt}
\def\hn{\hskip-.4pt}
\def\nh{\hskip-.7pt}
\def\nnh{\hskip-1pt}
\def\hrz{^{\hskip.5pt\text{\rm hrz}}}
\def\vrt{^{\hskip.2pt\text{\rm vrt}}}
\def\vt{\varTheta}
\def\op{S}
\def\vg{\varGamma}
\def\my{\mu}
\def\ny{\nu}
\def\gy{\lambda}
\def\ax{\alpha}
\def\bx{\beta}
\def\cx{\gamma}
\def\ay{a}
\def\by{b}
\def\cy{c}
\def\gp{\mathrm{G}}
\def\hp{\mathrm{H}}
\def\kp{\mathrm{K}}
\def\gm{\gamma}
\def\Gm{\Gamma}
\def\Lm{\Lambda}
\def\Dt{\Delta}
\def\dg{\Delta}
\def\sj{\sigma}
\def\lg{\langle}
\def\rg{\rangle}
\def\lr{\lg\hh\cdot\hs,\hn\cdot\hh\rg}
\def\vs{vector space}
\def\rvs{real vector space}
\def\vf{vector field}
\def\tf{tensor field}
\def\tvn{the vertical distribution}
\def\dn{distribution}
\def\pt{point}
\def\tc{tor\-sion\-free connection}
\def\ea{equi\-af\-fine}
\def\rt{Ric\-ci tensor}
\def\pde{partial differential equation}
\def\pf{projectively flat}
\def\pfs{projectively flat surface}
\def\pfc{projectively flat connection}
\def\pftc{projectively flat tor\-sion\-free connection}
\def\su{surface}
\def\sco{simply connected}
\def\psr{pseu\-\hbox{do\hs-}Riem\-ann\-i\-an}
\def\inv{-in\-var\-i\-ant}
\def\trinv{trans\-la\-tion\inv}
\def\feo{dif\-feo\-mor\-phism}
\def\feic{dif\-feo\-mor\-phic}
\def\feicly{dif\-feo\-mor\-phi\-cal\-ly}
\def\Feicly{Dif\-feo\-mor\-phi\-cal\-ly}
\def\diml{-di\-men\-sion\-al}
\def\prl{-par\-al\-lel}
\def\skc{skew-sym\-met\-ric}
\def\sky{skew-sym\-me\-try}
\def\Sky{Skew-sym\-me\-try}
\def\dbly{-dif\-fer\-en\-ti\-a\-bly}
\def\cs{con\-for\-mal\-ly symmetric}
\def\cf{con\-for\-mal\-ly flat}
\def\ls{locally symmetric}
\def\ecs{essentially con\-for\-mal\-ly symmetric}
\def\rr{Ric\-ci-re\-cur\-rent}
\def\kf{Killing field}
\def\om{\omega}
\def\vol{\varOmega}
\def\dv{\delta}
\def\ve{\varepsilon}
\def\zt{\zeta}
\def\kx{\kappa}
\def\mf{manifold}
\def\mfd{-man\-i\-fold}
\def\bmf{base manifold}
\def\bd{bundle}
\def\tbd{tangent bundle}
\def\ctb{cotangent bundle}
\def\bp{bundle projection}
\def\prc{pseu\-\hbox{do\hs-}Riem\-ann\-i\-an metric}
\def\prd{pseu\-\hbox{do\hs-}Riem\-ann\-i\-an manifold}
\def\Prd{pseu\-\hbox{do\hs-}Riem\-ann\-i\-an manifold}
\def\npd{null parallel distribution}
\def\pj{-pro\-ject\-a\-ble}
\def\pd{-pro\-ject\-ed}
\def\lcc{Le\-vi-Ci\-vi\-ta connection}
\def\vb{vector bundle}
\def\vbm{vec\-tor-bun\-dle morphism}
\def\kerd{\text{\rm Ker}\hskip2.7ptd}
\def\ro{\rho}
\def\sy{\sigma}
\def\ts{total space}
\def\pmb{\pi}

\newtheorem{theorem}{Theorem}[section] 
\newtheorem{proposition}[theorem]{Proposition} 
\newtheorem{lemma}[theorem]{Lemma} 
\newtheorem{corollary}[theorem]{Corollary} 
  
\theoremstyle{definition} 
  
\newtheorem{defn}[theorem]{Definition} 
\newtheorem{notation}[theorem]{Notation} 
\newtheorem{example}[theorem]{Example} 
\newtheorem{conj}[theorem]{Conjecture} 
\newtheorem{prob}[theorem]{Problem} 
  
\theoremstyle{remark} 
  
\newtheorem{remark}[theorem]{Remark}

\renewcommand{\theequation}{\arabic{section}.\arabic{equation}}

\title[New compact Weyl-par\-al\-lel manifolds]{New examples of compact 
Weyl-par\-al\-lel manifolds}
\author[A. Derdzinski]{Andrzej Derdzinski} 
\address{Department of Mathematics, The Ohio State University, 
Columbus, OH 43210, USA} 
\email{andrzej@math.ohio-state.edu} 
\author[I.\ Terek]{Ivo Terek} 
\address{Department of Mathematics, The Ohio State University, 
Columbus, OH 43210, USA} 
\email{terekcouto.1@osu.edu} 
\subjclass[2020]{Primary 53C50}
\def\leftmark{A.\ Derdzinski \&\ I.\ Terek}
\def\rightmark{New compact Weyl-par\-al\-lel manifolds}

\begin{abstract}
We prove the existence of compact pseu\-do\hs-Riem\-ann\-i\-an manifolds with 
parallel Weyl tensor which are neither con\-for\-mal\-ly flat nor locally
symmetric, and represent all indefinite metric signatures in all dimensions
$\,n\ge5$. Until now such manifolds were only known to exist in dimensions
$\,n=3j+2$, where $\,j\,$ is any positive integer \cite{derdzinski-roter-10}. 
As in \cite{derdzinski-roter-10}, our examples are dif\-feo\-mor\-phic to
nontrivial torus bundles over the circle and arise from a
quo\-tient-man\-i\-fold construction applied to suitably chosen discrete
isometry groups of dif\-feo\-mor\-phi\-cal\-ly-Euclid\-e\-an ``\nh model"
manifolds.
\end{abstract}

\maketitle

\setcounter{section}{0}
\setcounter{theorem}{0}
\renewcommand{\thetheorem}{\Alph{theorem}}
\section*{Introduction}
\setcounter{equation}{0}
{\it Essentially conformally symmetric\/} (briefly, {\it ECS\/}) {\it
manifolds\/} \cite{derdzinski-roter-07} are those 
pseu\-\hbox{do\hskip.7pt-}Riem\-ann\-i\-an manifolds 
of dimensions $\,n\ge4$ which have parallel Weyl tensor 
($\nabla W\hskip-1.7pt=0$) without being con\-for\-mal\-ly flat 
($W\hskip-1.7pt=0$) or locally symmetric ($\nabla\hskip-1.5ptR=0$). Their 
existence, for every $\,n\ge4$, was established by Roter
\cite[Corol\-lary~3]{roter}, who also showed that their metrics are all
indefinite \cite[Theorem~2]{derdzinski-roter-77}. A local description of all
ECS metrics is given in \cite{derdzinski-roter-09}.

Manifolds with $\,\nabla W\hskip-1.7pt=0\,$ are often called {\it
  con\-for\-mal\-ly symmetric\/} \cite{chaki-gupta}. This class represents one of the natural linear conditions imposed on $\nabla R$, cf. \cite[Chapter 16]{besse}, and due to its naturality it attracted the attention of several authors, including, Hotlo\'{s} \cite{hotlos}, Deszcz et al. \cite[Sect.\ 4]{deszcz-glogowska-hotlos-zafindratafa} and Mantica and Suh \cite[Section 3]{mantica-suh}, Deszcz et. al. \cite[Theorem 6.1]{deszcz-glogowska-hotlos-torgasev-zafindratafa}. Results on ECS manifolds, as well as techniques used in obtaining them have been applied to more general classes of manifolds \cite[Example 2.2]{deszcz-hotlos}, \cite{suh-kwon-yang}, \cite[Theorem 3]{alekseevsky-galaev}, \cite{calvino-louzao-garcia-rio-seoane-bascoy-vazquez-lorenzo}, \cite[Theorem 3.9]{calvino-louzao-garcia-rio-vazquez-abal-vazquez-lorenzo}, \cite[Lemma 3]{leistner-schliebner}, \cite{mantica-molinari}, \cite[proofs of Theorems 1.1 and 4.5]{tran} and, more recently, \cite{terek}. 

Every ECS manifold $\,M\,$ carries a distinguished null parallel distribution
$\,\mathcal{D}$ of dimension $\,d\in\{1,2\}$, discovered by Ol\-szak
\cite{olszak}. See also \cite[p.\ 119]{derdzinski-roter-09}. We will refer to
$\,d\,$ as the {\it rank\/} of $\,M\nh$. Explicitly, the sections of
$\,\mathcal{D}\,$ are the vector fields corresponding via the metric to 
$\,1$-forms $\,\xi\,$ such that $\,\xi\wedge[W(v,v'\nh,\,\cdot\,,\,\cdot\,)]=0\,$ for all vector fields $\,v,v'\nh$.

Compact rank-one ECS manifolds which are also geodesically complete are
known to exist \cite[Theorem 1.1]{derdzinski-roter-10} in all dimensions
$\,n\ge5\,$ with $\,n\equiv5\hskip4.4pt(\mathrm{mod}\hskip2.7pt3\hskip.5pt)$.
Whether compactness of an ECS manifold implies its geodesic completeness is an open question. Similarly, it is not known if a compact ECS manifold can have
the dimension $\,n=4$, or rank two, or be locally homogeneous, even though
non\-com\-pact locally homogeneous ECS manifolds of both ranks exist 
\cite{derdzinski-78} for all $\,n\ge4$.

Our main result may be viewed as an improvement on
\cite[Theorem 1.1]{derdzinski-roter-10}, since it covers every dimension
$\,n\ge5$, rather than just those congruent to $\,5\,$ modulo $\,3$.
\begin{theorem}\label{maith}
There exist compact rank-one ECS manifolds of all dimensions $\,n\ge5\,$ and
all indefinite metric signatures, dif\-feo\-mor\-phic to nontrivial torus
bundles over the circle, geodesically complete, and not locally homogeneous.
Their lo\-cal-isom\-e\-try types, in each fixed dimension and metric
signature, form -- in a natural sense -- an in\-fi\-nite-di\-men\-sion\-al
moduli space.
\end{theorem}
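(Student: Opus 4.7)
The plan is to follow the blueprint of \cite{derdzinski-roter-10}: build a global model $(\widehat M,\hat g)$ of a rank-one ECS metric, with $\widehat M$ diffeomorphic to $\mathbb{R}^n$, and then quotient it by a discrete cocompact subgroup of its isometry group. Drawing on the local description of rank-one ECS metrics in \cite{derdzinski-roter-09}, I would use coordinates $(t,s,y)\in\mathbb{R}\times\mathbb{R}\times\mathbb{R}^{n-2}$ and take
\[
\hat g \,=\, ds\,dt \,+\, f(t,y)\,dt^2 \,+\, \eta_{ij}\,dy^i\,dy^j,
\]
with $\eta$ a constant indefinite inner product on $\mathbb{R}^{n-2}$ of the prescribed transverse signature and $f$ polynomial in $y$ with $t$-dependent coefficients satisfying the linear ODE system equivalent to $\nabla W=0$. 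The coefficients will be chosen so that also $W\ne 0$ and $\nabla R\ne 0$; the null parallel line field $\widehat{\mathcal D}$ is then spanned by $\partial_s$.

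Next I would analyze the isometry group of $(\widehat M,\hat g)$. It contains an $(n-1)$-dimensional abelian subgroup of translations in $(s,y)$, together with a one-parameter group of ``twisted'' $t$-translations whose time-$\tau$ map is linear on $(s,y)$ via a matrix $B(\tau)\in\mathrm{GL}(n-1,\mathbb{R})$ determined by the coefficients of $f$. I would then choose $f$ and a period $\tau_0>0$ so that $A:=B(\tau_0)$ preserves a lattice $\Lambda\subset\mathbb{R}^{n-1}$. The discrete group $\Gamma=\Lambda\rtimes_A\mathbb{Z}$ will then act freely, properly, and cocompactly on $\widehat M$ by isometries, and the quotient $M=\widehat M/\Gamma$ fibers over $S^1=\mathbb{R}/\tau_0\mathbb{Z}$ via the $t$-coordinate, with fiber the $(n-1)$-torus $\mathbb{R}^{n-1}/\Lambda$; this bundle is nontrivial precisely when $A\ne I$.

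To conclude I would verify the remaining assertions. Geodesic completeness of $\widehat M$, and hence of $M$, is a direct calculation: the geodesic equations for this pp-wave-like metric reduce, along the null parallel direction, to a solvable linear ODE system that can be integrated by quadratures. Non-local-homogeneity follows by exhibiting a scalar invariant built from $\mathrm{Ric}$ and $\nabla R$ that depends nontrivially on $t$ in the model and so descends to a nonconstant function on $M$. The infinite-dimensional moduli space is produced by varying the $t$-dependent coefficients of $f$ inside the space of solutions of the ECS ODE system compatible with the chosen $A$ and $\Lambda$.

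The crux, and the point where the argument must improve over \cite{derdzinski-roter-10}, is the arithmetic step: producing, for \emph{every} $n\ge 5$, a polynomial $f$ and period $\tau_0$ for which $A=B(\tau_0)$ is conjugate into $\mathrm{GL}(n-1,\mathbb{Z})$ and preserves a full-rank lattice. The restriction $n\equiv 5\pmod 3$ in \cite{derdzinski-roter-10} arose because the low-degree $f$ used there yielded a monodromy with integer characteristic polynomial only in those dimensions. I expect that, by enlarging the class of admissible $f$'s --- for instance, allowing higher polynomial degree in $y$ or assembling several independent ``blocks'' of the earlier construction --- one can tune the characteristic polynomial of $A$ flexibly enough to meet the lattice condition in each remaining residue class mod~$3$, while preserving the rank-one ECS geometry and ensuring $A\ne I$ so that the resulting torus bundle is nontrivial.
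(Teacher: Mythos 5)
Your geometric framework is essentially the one the paper uses (a model $\hm=\rto\times V$ with a pp-wave--type metric, a discrete isometry group $\Gm$ acting freely and properly discontinuously, a torus bundle over $S^1$ via the $t$-coordinate, nontriviality from $T\ne\mathrm{Id}$), and you correctly identify the crux as the arithmetic step. But the proposal has a genuine gap precisely at that crux, and a secondary gap in the non-homogeneity argument.

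On the crux: you leave the key step as a hope (``I expect that\dots one can tune the characteristic polynomial of $A$ flexibly enough''), and the two concrete avenues you sketch are not what is needed. Allowing $f$ to have higher polynomial degree in $y$ would take you out of the rank-one ECS class: the local classification forces $\kx$ to be quadratic in the $V$-variable, which is exactly what the paper uses ($\kx=f(t)\lg x,x\rg+\lg Ax,x\rg$). What is actually required is two separate pieces. First, an arithmetic lemma (Lemma~\ref{polyn}) producing, for \emph{every} $m\ge3$, a $\mathrm{GL}(m,\bbZ)$ polynomial whose roots are all real, positive, distinct and $\ne1$; this is done by taking products of explicit quadratics $\gy^2+k\gy+1$, $k<-2$, and one cubic from \cite{derdzinski-roter-10}, and it is elementary. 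Second --- and this is the part your outline misses entirely --- one must prove that such a spectrum is realizable as the monodromy of the $(n-2)$-dimensional ODE $\dot u=B(t)u$ built from a periodic Riccati solution $\dot B+B^2=f+A$. The paper proves this (Theorem~\ref{pospe}) with an implicit-function-theorem argument on $C^k(S^1,\dg_m)$, sharpened by a smoothness-preserving retraction (Lemma~\ref{smthg}) to get $C^\infty$ data, and with a necessary-and-sufficient characterization (condition~(\ref{pol})) of which spectra can arise. None of this is present, even in outline, in your proposal, and the assertion ``$A:=B(\tau_0)$ preserves a lattice'' is exactly what Theorem~\ref{pospe} plus Lemma~\ref{polyn} deliver. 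A related minor inaccuracy: the monodromy acts on the $(n-2)$-dimensional solution space $\,\lz$, not on an $(n-1)$-dimensional space of ``translations in $(s,y)$''; the $s$-direction is a genuine translation fixed by the monodromy, and the $\mathcal{L}$-action twists $v\mapsto v+u(t)$ rather than translating by a constant.

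On non-homogeneity: the statement that a scalar invariant built from $\mathrm{Ric}$ and $\nabla R$ descends to a nonconstant function on $M$ is not available here. The Ricci tensor is $\mathrm{Ric}=(2-n)f(t)\,dt\otimes dt$ with $dt$ null, so the obvious scalar invariants vanish identically; there is no ready-made curvature scalar distinguishing points. The paper instead isolates the Olszak null parallel line field, uses it to characterize $t$ (up to affine substitutions) and $f$ (up to $t\mapsto q^2f(qt+p)$) as local geometric invariants (formula (\ref{inv})), and then shows that a Killing field transverse to the $t$-levels forces $(|f|^{-1/2})\dot{}$ to be locally constant, contradicting periodicity of the nonconstant $f$. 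That Lie-derivative argument is not replaceable by a scalar-invariant count.
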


The significance of the ``model'' manifolds (see Section \ref{mm}) used to produce our compact examples is twofold: in addition to representing all rank-one local isometry types, they constitute the universal coverings of a large class of compact rank-one ECS manifolds \cite[Theorem 7.1]{derdzinski-roter-08}.

Finally, the bundle structure in Theorem~\ref{maith} reflects a general principle: in
\cite{derdzinski-terek} we show that a 
non-lo\-cal\-ly-ho\-mo\-ge\-ne\-ous compact rank-one ECS manifold, replaced if
necessary by a two-fold isometric covering, {\it must be a bundle over the
circle, having\/ $\,\mathcal{D}^\perp\hskip-2pt$ as the vertical
distribution.}

\section*{Outline of the construction}

The paper is structured as follows. After preliminaries comes 
Section~\ref{mm}, presenting rank-one ECS model manifolds, which exist in all
dimensions $\,n\ge4\,$ (even though our construction of their compact
quotients requires assuming that $\,n\ge5$). The purpose of Section~\ref{gc}
is to show that the particular model manifolds which we focus on are
geodesically complete, but not locally homogeneous, so that the same
conclusion holds for our compact quotients. In Section~\ref{gl} we observe
(Lemma~\ref{polyn}) that, given an integer $\,m\ge3$, there exists a 
$\,\mathrm{GL}\hh(m,\bbZ)\,$ polynomial with $\,m\,$ distinct real positive
roots different from $\,1$. The next two sections, crucial for our existence
argument, are devoted to proving, in Theorem~\ref{pospe}, that all the
$\,m$-el\-e\-ment $\,\mathrm{GL}\hh(m,\bbZ)$-spec\-tra just mentioned -- and
even a wider class characterized by condition (\ref{pol}) -- arise via a
specific integral formula from periodic curves $\,\bbR\ni t\mapsto B=B(t)\,$
of diagonal $\,m\times m\,$ matrices satisfying an ordinary differential
equation of the form $\,\dot B\hs+B^2\nh=\fh\hs+A$, with a function $\,\fh\,$
and matrix $\,A$ appearing in a suitable rank-one ECS model manifold
$\,\hm\,$ of dimension $\,n=m+2$. Finally, Section~\ref{po} provides the
existence proof: a curve $\,t\mapsto B(t)\,$ realizing, for any given
$\,m\ge3$, one of the $\,\mathrm{GL}\hh(m,\bbZ)$-spec\-tra of
Lemma~\ref{polyn}, is used to construct a group $\,\Gm\,$ acting on the
corresponding model $\,\hm\,$ freely and properly dis\-con\-tin\-u\-ous\-ly 
by isometries, with a compact quotient manifold $\,M=\hm\nnh/\hh\Gm\nh$.

The most important ingredient of the above argument is the
$\,\mathrm{GL}\hh(m,\bbZ)$-spec\-trum property of the curve
$\,t\mapsto B(t)$.

\renewcommand{\thetheorem}{\thesection.\arabic{theorem}}
\section{Preliminaries}\label{pr} 
\setcounter{equation}{0}
By a {\it lattice\/} in a real vector space $\,\lz\,$ with 
$\,\dim\lz=m<\infty\,$ we mean, as usual, an additive subgroup of $\,\lz\,$ 
generated by some basis of $\,\lz$. Then
\begin{equation}\label{dsc}
\Lm\,\mathrm{\ is\ a\ discrete\ subset\ of}\,\lz\hs,
\end{equation}
as one sees identifying $\,\Lm\,$ and $\,\lz\,$ with $\,\bbZ^m$ and $\,\mr\nh$.

Suppose that a group $\,\Gm\hs$ act on a manifold $\,\hm\,$ freely by \feo s.
The action of $\,\Gm\hs$ on $\,\hm\,$ is called {\it 
properly dis\-con\-tin\-u\-ous\/} if there exists a locally \feic\ surjective 
mapping $\,\hm\to M\,$ onto some manifold $\,M\nh$, the pre\-im\-ages of
points of $\,M\,$ under which are precisely the orbits of the $\,\Gm\hs$
action. 
\begin{remark}\label{stbil}A free left action of a group $\,\Gm\hs$ on a
manifold $\,\hm\,$ is properly dis\-con\-tin\-u\-ous if and only if for any 
sequences $\,a\hn_j\w$ in $\,\Gm\hs$ and $\,y\hn_j\w$ in $\,\hm\nh$, with
$\,j\,$ ranging over positive integers, such that
both $\,y\hn_j\w$ and $\,a\hn_j\w y\hn_j\w$ converge, the sequence
$\,a\hn_j\w$ is constant except for finitely many $\,j$. See
\cite[Exercise 12\hs-\nh19 on p.\ 337]{lee}.
\end{remark}
\begin{remark}\label{bndpr}
Any smooth sub\-mer\-sion from a compact manifold into a connected manifold is
a (surjective) bundle projection. This is the compact case of Ehres\-mann's
fibration theorem \cite[Corollary 8.5.13]{dundas}.
\end{remark}

\section{The model manifolds}\label{mm} 
\setcounter{equation}{0}
Let $\,\fh\nh,p,n,\mv\nh,\lr\,$ and $\,A\,$ denote a nonconstant
$\,C^\infty$ function $\,\fh\nh:\bbR\to\bbR$, periodic of period $\,p>0$, an
integer $\,n\ge4$, a pseu\-\hbox{do\hs-}Euclid\-e\-an inner product $\,\lr$
on a \rvs\ $\,\mv\hh$ of dimension $\,n-2$, and a
nonzero, traceless, $\,\lr$-self-ad\-joint linear en\-do\-mor\-phism
$\,A\,$ of $\,\mv\nnh$. \hbox{Consider the \prc\ \cite{roter}}
\begin{equation}\label{met}
\kappa\,dt^2\nh+\,dt\,ds\hs+\hs\delta
\end{equation}
on the manifold $\,\hm\nh=\rto\nh\times\mv\hs\approx\,\rn\nh$. The products of
differentials stand here for symmetric products, $\,t,s\,$ are the Cartesian
coordinates on $\,\rto$ treated, with the aid of the projection
$\,\hm\to\rto\nh$, as functions
$\,\hm\to\bbR$, and $\,\delta\,$ is the pull\-back to $\,\hm\,$ of the flat
(constant) \prc\ on $\,\mv$ arising from the inner product $\,\lr$, while
$\,\kx:\hm\to\bbR\,$ is the function given by 
$\,\kx(t,s,x)=f(t)\hskip.4pt\langle x,x\rangle+\langle Ax,x\rangle$. 

The metric (\ref{met}) turns $\,\hm\,$ into a rank-one ECS manifold
\cite[Theorem~4.1]{derdzinski-roter-09}.

We now define $\,\xe\hs$ to be the vector space of all $\,C^\infty$ solutions
$\,u:\bbR\to\mv\,$ to the differential equation
$\,\ddot u(t)=\fh(t)\hh u(t)+Au(t)$, and set
$\,\gp=\bbZ\times\bbR\times\xe\nh$. Whenever $\,u,w\in\xe\nh$, the function 
$\,\varOmega(u,w)=\lg\dot u,w\rg-\lg u,\dot w\rg:\bbR\to\bbR\,$ is constant,
giving rise to a nondegenerate \skc\ bi\-lin\-e\-ar form $\,\varOmega\,$ on
$\,\xe\nh$. We also have a natural linear isomorphism $\,T:\xe\to\xe\,$ with 
$\,(Tu)(t)=u(t-p)$. Next, we turn $\,\gp\,$ into a Lie group by declaring the
group operation to be
\begin{equation}\label{ope}
(k,q,u)\cdot(\ell,r,w)
=(k+\ell,\hs q+r-\varOmega(u,T^{\hs \ell\nh}w),\hs T^{\hs-\hs \ell\nh}u+w)\hh,
\end{equation}
and introduce a left action of the Lie group $\,\gp\,$ on the manifold
$\,\hm\nh$, with
\begin{equation}\label{act}
(k,q,u)\cdot(t,s,v)
=(t+kp,\hs s+q-\lg\dot u(t),2v+u(t)\rg,\hs v+u(t))\hs.
\end{equation}
With all triples assumed to be elements of $\,\gp$, one then has
\begin{equation}\label{cnj}
(k,q,u)\cdot(0,r,w)\cdot(k,q,u)^{-1}\nh
=(0,r-2\hh\varOmega(u,w),T^{\hs k}w)\hh,
\end{equation}
Our $\,\gp\,$ also acts on the manifold $\,\rto\nh\times\xe\hs$, \feic\ 
to $\,\bbR^{\hskip-.5pt2n-2}\nnh$, by
\begin{equation}\label{qkx}
(k,q,u)\cdot(t,z,w)\,=\,(t+kp,\,z+q-\varOmega(u,w),\,T^k(w+u))\hs,
\end{equation}
and the following mapping is equi\-var\-i\-ant relative to the
$\,\gp$-ac\-tions (\ref{qkx}) and (\ref{act}):
\begin{equation}\label{mpg}
\rto\nh\times\xe\ni(t,z,w)\mapsto(t,s,v)
=(t,\,z-\lg\dot w(t),w(t)\rg,\,w(t))\in\hm\hs.
\end{equation}
All the above facts are established in \cite[p.\ 77]{derdzinski-roter-10},
where it is also shown that
\begin{equation}\label{ism}
\mathrm{the\ group\ }\,\gp\hs\mathrm{\ acts\ on\ }\,\hm\,\mathrm{\ by\
isometries\ of\ the\ metric\ (\ref{met}).}
\end{equation}
\begin{remark}\label{relax}If, at the beginning of this section,
$\,\fh\nh:I\to\bbR\,$ is just a nonconstant $\,C^\infty$ function on an open 
interval $\,I\subseteq\bbR$, rather than being defined on $\,\bbR$ and
periodic, while the remaining data $\,n,\mv\nh,\lr\,$ and $\,A\,$ are as
before, $\,I\nh\times\bbR\times\mv$ with the metric (\ref{met})
will still be a rank-one ECS manifold; conversely, in any
$\,n$-di\-men\-sion\-al rank-one ECS manifold, every point at which the
covariant derivative of the Ric\-ci tensor is nonzero has a neighborhood
isometric to an open su\-bset of a manifold of this type
\cite[Theorem~4.1]{derdzinski-roter-09}.
\end{remark}

\section{Geodesic completeness}\label{gc} 
\setcounter{equation}{0}
Later in this section, showing that local homogeneity implies relation 
(\ref{dfm}), we use much weaker assumptions than necessary for the purposes
of the present paper. The reason is that we need to cite such a general
conclusion when proving a result in another paper, namely, 
\cite[Theorem 6.3]{derdzinski-terek}.

For the manifold $\,\hm\nh=\rto\nh\times\mv\hs\approx\,\rn$ with the metric
(\ref{met}),
\begin{equation}\label{gco}
\hm\,\mathrm{\ is\ geodesically\ complete,\ but\ not\ locally\ homogeneous.}
\end{equation}
To see this, we let $\,i,j\,$ range over $\,2,\dots,n-1$, fix linear 
coordinates $\,x^i$ on $\,\mv$ which, along with $\,x^1\nh=t\,$ and
$\,x^n\nh=s/2$, form a global coordinate system on $\,\hm\nh$. The
pos\-si\-bly-non\-ze\-ro components of (\ref{met}), its reciprocal metric,
and the Le\-vi-Ci\-vi\-ta connection $\,\nabla\hs$ then are those
algebraically related to
\begin{equation}\label{pnz}
\begin{array}{l}
g_{11}\w=\kx\hh,\hskip6ptg_{1n}\w=1\hh,\hskip6pt
g^{1n}=1\hh,\hskip6ptg^{nn}=-\kx\hh,\hskip6pt
\mathrm{(constants)\ }\,g_{ij}\w\mathrm{\ and\ }\,g^{ij}\hh,\\
\vg_{\hskip-2.7pt11}^{\hs n}=\partial\nh_1\w\kx/2\hh,\hskip28pt
\vg_{\hskip-2.7pt11}^{\hs i}=-g^{ij}\partial\nnh_j\w\kx/2\hh,
\hskip28pt
\vg_{\hskip-2.7pt1i}^{\hs n}=\partial\nh_i\w\kx/2\hh.
\end{array}
\end{equation}
See \cite[p.\ 93]{roter}. With $\,(\hskip2.3pt)\hskip-2.2pt\dot{\phantom o}$
referring to the geodesic parameter, the geodesic equations read 
$\,\ddot x^1\nh=0\,$ (and so $\,c=\dot x^1$ is constant), 
$\,\ddot x\hh^i\nh=-c^2\vg_{\hskip-2.7pt11}^{\hs i}$ and 
$\,\ddot x^n\nh=-c^2\vg_{\hskip-2.7pt11}^{\hs n}
-2c\vg_{\hskip-2.7pt1i}^{\hs n}\dot x^i\nh$. Note that $\,\kx\,$ has the
form $\,\kx=\fh(t) g_{ij}\w x\hh^ix^j\nh+a_{ij}\w x\hh^ix^j\nh$, with constants
$\,g_{ij}\w$ and $\,a_{ij}\w$. Being linear in the
parameter of a maximal geodesic, $\,x^1$ is defined on $\,\bbR$, and 
the same follows for all $\,x\hh^i$ (as they now satisfy a system of 
linear sec\-ond-or\-der equations) and for $\,x^n$ (which then has a
prescribed second derivative). Thus, completeness follows. The second claim of
(\ref{gco}) is a consequence of what we show later in this
section: namely, more generally, for any 
metric on $\,I\nh\times\bbR\times\mv\hs$ as in Remark~\ref{relax}, with an
open interval $\,I\subseteq\bbR$, if $\,(t,s,x)\in I\nh\times\bbR\times\mv\hs$
has a prod\-uct-type neighborhood $\,\,U\hs$ involving a sub\-in\-ter\-val
$\,I\nh'$ of $\,I\hs$ with a Kil\-ling field $\,v\,$ on $\,\,U$ such 
that $\,d_v\w\hh t\ne0\,$ everywhere in $\,\,U$ (or, equivalently, 
$\,v^1\nh\ne0\,$ everywhere in $\,\,U$ for the above coordinates
$\,x^1\nh,\dots,x^n$), then
\begin{equation}\label{dfm}
(|\fh|^{-\nnh1/2})\hskip-2.2pt\dot{\phantom o}\mathrm{\ is\ constant\ on\
every\ subinterval\ of\ }\,I\hn'\mathrm{\ on\ which\ }\,|\fh|>0\hh,
\end{equation}
$(\hskip2.3pt)\hskip-2.2pt\dot{\phantom o}$ this time denoting $\,d/dt$, and
(\ref{dfm}) in turn easily yields
\begin{equation}\label{yie}
\mathrm{positivity\ of\ }\,|\fh|\,\mathrm{\ and\ constancy\ of\ 
}\,(|\fh|^{-\nnh1/2})\hskip-2.2pt\dot{\phantom o}\mathrm{\ on\ }\,I\nh'\nh,
\end{equation}
since a maximal open subinterval $\,I\nh'\nh'$ of $\,I\nh'$ with $\,|\fh|>0\,$
on $\,I\nh'\nh'$ must equal $\,I\nh'$ (or else $\,I\nh'\nh'$ would have a
finite endpoint lying in $\,I\nh'$, at which $\,|\fh|^{-\nnh1/2}\nh$, being a
linear function, would have a finite limit, contrary to maximality of 
$\,I\nh'\nh'\nh$). Local homogeneity of (\ref{met}) on
$\,\hm\nh=\rto\nh\times\mv\hs$ would, by (\ref{yie}) for $\,I\nh'\nh=\bbR$,
imply that the nonconstant periodic function $\,|\fh|^{-\nnh1/2}$ is linear.
This contradiction proves (\ref{gco}).

We now establish (\ref{dfm}), assuming what is stated in the five
lines preceding (\ref{dfm}), and using the coordinates $\,x^1\nh,\dots,x^n$
on $\,I\nh\times\bbR\times\mv\hs$ mentioned there. (Thus --
cf. Remark~\ref{relax} -- we are looking at an {\it arbitrary\/ 
$\,n$-di\-men\-sion\-al rank-one ECS manifold}, rather than just $\,\hm\,$ 
with the metric (\ref{met}), where $\,\fh\,$ is periodic.) First,
\begin{equation}\label{inv}
\begin{array}{l}
\mathrm{the\ function\ }\,t=x^1\mathrm{\ is\ determined,\ uniquely\ up\ to\
af\-fine\ substitu}\hyp\\
\mathrm{tions,\ by\ the\ local\ geometry\ of\ the\ metric\ (\ref{met}),\ while\
the\ assign}\hyp\\
\mathrm{ment\ }\,t\mapsto\fh(t)\mathrm{,modulo\ its\ replacements\ by\ }\,t\mapsto q^2\nnh\fh(qt+p)\mathrm{,\
with}\\
q,p\in\bbR\,\mathrm{\ and\ }\,q\ne0\mathrm{,\ is\ a\ 
local\ geometric\ invariant\ of\ (\ref{met})\ as\ well.}
\end{array}
\end{equation}
In fact, by (\ref{pnz}), the coordinate vector field $\,\partial\nh_n\w$ is
parallel. Hence so is the $\,1$-form $\,dt=dx^1$ corresponding to 
$\,\partial\nh_n\w$ via the metric (\ref{met}). According to 
\cite[p.\ 93]{roter}, where the convention about the sign of the curvature
tensor is the opposite of ours,
\begin{equation}\label{ric}
\mathrm{the\ metric\ (\ref{met})\ has\ the\ Ric\-ci\ tensor\
}\,\mathrm{Ric}=(2-n)\fh(t)\,dt\otimes dt\hh,
\end{equation}
and the only
pos\-si\-bly-non\-ze\-ro components of its Weyl tensor $\,W\hs$ are those
algebraically related to $\,W\hskip-2.5pt_{1i1\nh j}\w$. Thus, for any vector 
fields $\,v,v'\nh$, the $\,2$-form $\,W(v,v'\nh,\,\cdot\,,\,\cdot\,)$ is 
$\,\wedge$-di\-vis\-i\-ble by $\,dt=dx^1$ and, consequently, the parallel
gradient $\,\partial\nh_n\w=\nabla\hn t\,$ spans the Ol\-szak distribution
$\,\mathcal{D}\,$ described in the Introduction. This yields the first claim
of (\ref{inv}), while the second one is then obvious from (\ref{ric}). By
(\ref{inv}), the local flow of our Kil\-ling 
field $\,v\,$ on $\,\,U\nh$, with $\,d_v\w\hh t\ne0$, sends $\,t\,$ to
af\-fine functions of $\,t$, and so $\,d_v\w\hh t=\lv\hh t=at+b$, where 
$\,a,b\in\bbR\,$ and $\,(a,b)\ne(0,0)$. Thus,
$\,\lv\hs dt=d\lv\hh t=a\hs dt$
and $\,\lv[\fh(t)]=d_v\w[\fh(t)]=\dot\fh(t)\,d_v\w\hh t=(at+b)\dot\fh(t)$. 
From (\ref{ric}) and the Leib\-niz rule, 
$\,0=\lv[\fh(t)\,dt\otimes dt]=[(at+b)\dot\fh(t)+2a\fh(t)]\,dt\otimes dt$. As
$\,(a,b)\ne(0,0)$, (\ref{dfm}) follows.
\begin{remark}\label{infdm}Let $\,\gp\,$ be the group defined in
Section~\ref{mm}, acting on $\,\hm\,$ via (\ref{act}). Due to (\ref{inv}), if
a family of metrics arises from (\ref{met}) 
on quotient manifolds $\,M=\hm\nnh/\hh\Gm\,$ of a fixed dimension $\,n\ge5$,
for sub\-groups $\,\Gm\subseteq\gp\,$ acting on $\,\hm\,$ freely and properly
dis\-con\-tin\-u\-ous\-ly, where $\,\fh\,$ 
used in (\ref{met}) ranges over an in\-fi\-nite-di\-men\-sion\-al manifold of 
$\,C^\infty$ functions of a given period $\,p$, then such a family of metrics
forms an in\-fi\-nite-di\-men\-sion\-al moduli space of lo\-cal-isom\-e\-try
types.
\end{remark}

\section{$\mathrm{GL}\hh(m,\bbZ)\,$ polynomials}\label{gl} 
\setcounter{equation}{0}
By a $\,\mathrm{GL}\hh(m,\bbZ)\,$ {\it polynomial\/} we mean here any degree
$\,m\,$ polynomial with integer coefficients, leading coefficient
$\,(-\nnh1)^m\nh$, and constant term\ $\,1\,$ or $\,-\nnh1$. It is well known,
cf.\ \cite[p.\ 75]{derdzinski-roter-10}, that these are precisely the
characteristic polynomials of matrices in $\,\mathrm{GL}\hh(m,\bbZ)$, the
group of invertible elements in the ring of $\,m\times m$ matrices with
integer entries. Equivalently, the $\,\mathrm{GL}\hh(m,\bbZ)\,$ polynomials
are
\begin{equation}\label{glz}
\begin{array}{l}
\mathrm{the\ characteristic\ polynomials\ of\ en\-do\-mor\-phisms\ of\ an\ }
\hs\,m\hyp\mathrm{di\-men}\hyp\\
\mathrm{sion\-al\ real\ vector\ space\ }\hs V\nh\mathrm{\ sending\ some\
lattice\ }\hs\Lm\hs\mathrm{\ in\ }\hs V\nh\mathrm{\ onto\ itself.}
\end{array}
\end{equation}
On $\,(\gy_1\w,\dots,\gy_m\w)\in\mr$ one may impose the following condition:
\begin{equation}\label{pol}
\begin{array}{l}
\{\gy_1\w,\dots,\gy_m\w\}\,\mathrm{\ is\ a\ subset\ of\ 
}\,(0,\infty)\smallsetminus\{1\}\mathrm{,\ not}\\
\mathrm{of\ the\ form\ }\,\{\gy\}\,\mathrm{\ or\ }\,\{\gy,\gy\nh^{-\nnh1}\nh\}\,\mathrm{\
with\ any\ }\,\gy>0\hh.
\end{array}
\end{equation}
This amounts to requiring that
$\,\gy_1\w,\dots,\gy_m\w\in(0,\infty)\,$ and the absolute values 
$\,|\nh\log\gy_1\w\hn|,\dots,|\nh\log\gy_m\w\hn|\,$ be all positive, 
but not all equal. Clearly,
\begin{equation}\label{cns}
\mathrm{if\
}\{\gy_1\w,\dots,\gy_m\w\}\subseteq(0,\infty)\smallsetminus\{1\}\,\mathrm{\
has\ more\ than\ two\ elements,\ (\ref{pol})\ follows.}
\end{equation}
\begin{lemma}\label{polyn}For every integer\/ $\,m\ge3\,$ there exists a\/ 
$\,\mathrm{GL}\hh(m,\bbZ)\,$ polynomial, the roots\/ 
$\,\gy_1\w,\dots,\gy_m\w$ of which are all real, positive, distinct, and
different from\/ $\,1$.
\end{lemma}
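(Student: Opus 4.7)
The plan is to use the characterization (\ref{glz}) and reduce the problem to exhibiting, for each $m\ge 3$, a matrix $A\in\mathrm{GL}\hh(m,\bbZ)$ whose $m$ eigenvalues are real, positive, pairwise distinct, and different from $1$. I would build $A$ in block-diagonal form from a short list of $2\times 2$ and $3\times 3$ pieces. Since the characteristic polynomial of a block matrix is the product of the characteristic polynomials of the blocks, it automatically has integer coefficients, leading coefficient $(-1)^m$, and constant term $\det A=\pm 1$.

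For the $2\times 2$ pieces I would use the family
$$A_k\,=\,\left(\begin{array}{cc} k+1 & 1 \\ k & 1 \end{array}\right)\in\mathrm{SL}\hh(2,\bbZ)\hh,\qquad k\ge 1\hh,$$
with characteristic polynomial $\lambda^2-(k+2)\lambda+1$. Its discriminant $(k+2)^2-4$ is positive, yielding two distinct positive real eigenvalues, and substituting $\lambda=1$ gives $-k\ne 0$, so $1$ is not an eigenvalue. Two such polynomials with $k\ne k'$ have no common root, as equality would force $(k-k')\lambda=0$ and hence $\lambda=0$, contradicting constant term $1$.

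For even $m=2j$ I would take $A=A_1\oplus\cdots\oplus A_j\in\mathrm{SL}\hh(2j,\bbZ)$, producing $2j$ pairwise distinct positive real eigenvalues none of which equals $1$. For odd $m=2j+1\ge 3$ I would add one more block, the companion matrix of
$$P(x)\,=\,x^3-5x^2+6x-1\hh.$$
Sign changes $P(0)<0<P(1)$, $P(2)<0<P(4)$ place three distinct positive roots in the intervals $(0,1)$, $(1,2)$, $(2,4)$; none equals $1$ because $P(1)=1\ne 0$; and since $P(\pm 1)\ne 0$, $P$ has no rational root and hence is irreducible over the rationals, so its roots are algebraic of degree exactly $3$ and thus disjoint from the (degree $\le 2$) eigenvalues of the quadratic blocks. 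Block-summing this companion matrix with $A_1,\ldots,A_{j-1}$ (only the $3\times 3$ block appears when $m=3$) finishes the construction.

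The remaining bookkeeping, verifying the integrality and sign conventions on the coefficients and the pairwise distinctness of eigenvalues, is routine. The single point deserving any care is separating the cubic spectrum from the quadratic spectra in the odd case, and the minimal-polynomial argument above handles it; I anticipate no deeper obstacle.
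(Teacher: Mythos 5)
Your proof is correct and follows essentially the same plan as the paper: both decompose into a block-diagonal product of $\mathrm{SL}(2,\bbZ)$ quadratics of the form $\lambda^2+c\lambda+1$ (your $\lambda^2-(k+2)\lambda+1$ with $k\ge1$ is the same family as the paper's $\lambda^2+k\lambda+1$, $k<-2$), together with one cubic for odd $m$; and your explicit cubic $x^3-5x^2+6x-1$ is in fact the instance $k=5$, $\ell=6$ of the cubic family the paper cites from \cite[Lemma~2.1]{derdzinski-roter-10}. One small improvement on your part: you use the rational root theorem to show the cubic is irreducible over $\mathbb{Q}$, cleanly separating its roots from the degree-$\le 2$ eigenvalues of the quadratic blocks -- a distinctness detail the paper's proof leaves implicit.
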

\begin{proof}For $\,m=3$, according to \cite[Lemma 2.1]{derdzinski-roter-10}, 
whenever $\,k,\ell\in\bbZ\,$ and $\,2\le k<\ell\le k^2\nnh/4$, the polynomial 
$\,\gy\mapsto-\gy\nh^3\nh+k\gy\nh^2\nh-\ell\gy+1\,$ has three distinct real
roots $\,\gy,\mu,\nu\,$ with $\,1/\ell<\gy<1<\mu<k/2<\nu<k$, as required.
Since the quadratic polynomial $\,\gy\mapsto\gy\nh^2\nh+k\gy+1\,$
with any integer $\,k<-\nh2\,$ has one root in $\,(0,1)\,$ and another in
$\,(0,\infty)$, both of them depending on $\,k\,$ via an injective function,
products of such quadratic polynomials with different values of $\,k\,$
realize our claim for all even $\,m$, while the case of odd $\,m\,$ is
settled by the same products, further multiplied by the above cubic
polynomial.
\end{proof}
The quartic polynomials obtained in the above proof have very special sets
of roots, of the form $\,\{\gy,\gy\nh^{-\nnh1}\nnh,\mu,\mu^{-\nnh1}\nh\}$. To
obtain more diverse spectra, consider
$\,\gy\mapsto P(\gy)=\gy\nh^4\nh-m\gy\nh^3\nh+\ell\gy\nh^2\nh-k\gy+1$,
with $\,k,\ell,m\in\bbZ$. The inequalities
\[
P(2)\,\le\,16\hs P(1/2)\,<\,0\,<\,P(1)
\]
sufficient for our conclusion will follow once $\,k\ge7$, as we may then choose
$\hs m\hs$ with $\,k\le m\le2k-7\,$ (and so $\,16\hs P(1/2)-P(2)=6(m-k)\ge0$).
This also gives $\,2(k+m)-2<4k+m-8$, and hence $\,2(k+m)-4<2\ell<4k+m-8\,$ for
some $\,\ell$, which amounts to $\,P(1)>0>16\hs P(1/2)$.

\section{Smooth\-ness-pre\-serv\-ing retractions}\label{sr} 
\setcounter{equation}{0}
The following fact will be needed in Section~\ref{na}. A {\it retraction\/}
from a set onto a subset is, as usual, a mapping equal to the identity on the
subset, and by the {\it function component\/} of a pair $\,(\fh\nh,A)\,$ we
mean the $\,C^k$ function $\,\fh\nh$.
\begin{lemma}\label{smthg}
Let there be given an integer\/ $\,k\ge0$, a compact smooth manifold\/ $\,Q$,
fi\-nite-di\-men\-sion\-al real vector spaces\/
$\,\mathcal{X}\,$ and\/ $\,\mathcal{Z}$, an open set\/
$\,\mathcal{U}\nh'\subseteq C^k\nh(Q,\bbR)\times\mathcal{X}$, a smooth mapping 
$\,\varPhi:\mathcal{U}\nh'\to\mathcal{Z}$, and a point\/ 
$\,y\in\mathcal{U}\nh'\nh$. If the differential\/ $\,d\hh\varPhi\nnh_y\w$ is 
surjective, and\/ $\,z=\varPhi(y)$, then there exist a neighborhood\/
$\,\mathcal{U}\,$ of\/ $\,y\,$ in\/ $\,\mathcal{U}\nh'\,$ and a smooth 
retraction\/ $\,\ret:\mathcal{U}\to\mathcal{U}\cap\varPhi^{-\nnh1}\nh(z)\,$ 
such that, for every\/ $\,x\in\mathcal{U}\,$ having a smooth function
component, the function component of\/ $\,\ret(x)\,$ is smooth as well.
\end{lemma}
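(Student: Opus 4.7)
The plan is to apply the implicit function theorem, but in a form tailored to preserve smoothness: rather than allow the IFT correction to live in the ambient Banach space $\,C^k(Q,\bbR)\times\mathcal{X}$, I confine it to a finite-dimensional subspace of $\,C^\infty(Q,\bbR)\times\mathcal{X}\,$ on which $\,d\hh\varPhi_y\,$ is already an isomorphism onto $\,\mathcal{Z}$. This finite-dimensional slice is what will make the smoothness-preservation clause fall out for free.

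First I would select the correction space. Letting $\,N=\dim\mathcal{Z}$, surjectivity of $\,d\hh\varPhi_y\,$ yields vectors $\,v_1,\dots,v_N\in C^k(Q,\bbR)\times\mathcal{X}\,$ whose images form a basis of $\,\mathcal{Z}$. Since $\,C^\infty(Q,\bbR)\,$ is $\,C^k$-dense in $\,C^k(Q,\bbR)\,$ (by mollification in charts for the compact manifold $\,Q$), and since forming a basis is an open condition preserved under the continuous map $\,d\hh\varPhi_y$, I replace each $\,v_i\,$ by a nearby pair $\,\tilde v_i\in C^\infty(Q,\bbR)\times\mathcal{X}\,$ with the same property and set $\,\mathcal{W}=\mathrm{span}(\tilde v_1,\dots,\tilde v_N)$; then $\,d\hh\varPhi_y\,$ restricts to a linear isomorphism $\,\mathcal{W}\to\mathcal{Z}$. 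The standard implicit function theorem applied to the smooth map $\,F(x,w)=\varPhi(x+w)-z\,$ near $\,(y,0)$, whose $\,w$-partial derivative there is an isomorphism of finite-dimensional spaces, produces a neighborhood $\,\mathcal{U}\,$ of $\,y\,$ and a smooth map $\,w:\mathcal{U}\to\mathcal{W}\,$ with $\,w(y)=0\,$ and $\,\varPhi(x+w(x))=z$. I then set $\,\ret(x)=x+w(x)$. The uniqueness clause of the IFT forces $\,w(x)=0\,$ whenever $\,x\in\mathcal{U}\cap\varPhi^{-\nnh1}(z)$, giving the retraction property after shrinking $\,\mathcal{U}\,$ if needed.

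Smoothness of the function component of $\,\ret(x)\,$ is then automatic: if $\,x=(f,a)\,$ has $\,f\in C^\infty(Q,\bbR)$, the function component of $\,\ret(x)\,$ is the sum of $\,f\,$ and the function component of $\,w(x)$, the latter lying in $\,C^\infty(Q,\bbR)\,$ by construction of $\,\mathcal{W}$. The main obstacle I anticipate is precisely this smoothness-preservation requirement: a naive implicit-function argument would yield a correction in the ambient space $\,C^k(Q,\bbR)\times\mathcal{X}\,$ with no reason to be $\,C^\infty$, and the crux of the argument is the finite-dimensional slice $\,\mathcal{W}$, whose existence rests on $\,\mathcal{Z}\,$ being finite-dimensional together with $\,C^k$-density of $\,C^\infty(Q,\bbR)$.
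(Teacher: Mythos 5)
Your proof is correct and follows essentially the same route as the paper's: confine the implicit-function correction to a finite-dimensional subspace spanned by elements with smooth function components, so that the smoothness-preservation clause comes for free. The only cosmetic difference is that you explicitly justify the existence of such representatives via $\,C^k$-density of $\,C^\infty(Q,\bbR)\,$ in $\,C^k(Q,\bbR)\,$ and openness of the basis condition, whereas the paper simply asserts that a basis of the quotient $\,[C^k(Q,\bbR)\times\mathcal{X}]/\mathrm{Ker}\,d\varPhi_y\,$ may be represented by elements with smooth function components.
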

\begin{proof}Let
$\,x_{\nh1}\w,\dots,x_{\nh m}\w\in C^k\nh(Q,\bbR)\times\mathcal{X}\,$
be representatives of a basis of the quotient space 
$\,[C^k\nh(Q,\bbR)\times\mathcal{X}]/\mathrm{Ker}\,d\hh\varPhi\nnh_y\w\hs 
\approx\,\mathcal{Z}$, having smooth function components. The smooth mapping
$\,F:C^k\nh(Q,\bbR)\times\mathcal{X}\times\mr\to\mathcal{Z}\,$ sending
$\,(x,a^1\nh,\dots,a^m)\,$ to
$\,\varPhi(x+a^i\nh x_{\nh i}\w)$, with
$\,x\in C^k\nh(Q,\bbR)\times\mathcal{X}\,$ and summation over $\,i=1,\dots,m$,
has the value $\,z\,$ at $\,(y,0,\dots,0)$, while the differential at 
$\,(y,0,\dots,0)\,$ of the restriction of $\,F\,$ to $\,\{y\}\times\mr\nh$, 
given by $\,(a^1\nh,\dots,a^m)\mapsto\hs
d\hh\varPhi\nnh_y\w(a^i\nh x_{\nh i}\w)$, is an iso\-mor\-phism due to our
choice of $\,x_{\nh1}\w,\dots,x_{\nh m}\w$ as a
quotient basis. The implicit mapping
theorem \cite[p.\ 18]{lang} thus provides neighborhoods
$\,\mathcal{U}\,$ of $\,y\,$ in $\,\mathcal{U}\nh'\,$ and
$\,\mathcal{U}\nh'\nh'$
of $\,0\,$ in $\,\mr$ such that every $\,x\in\mathcal{U}$ has 
$\,F(x,a^1(x),\dots,a^m(x))=z\,$ with a unique
$\,(a^1(x),\dots,a^m(x))\in\mathcal{U}\nh'\nh'\nh$, which also depends
smoothly on $\,x$. We may now set 
$\,\ret(x)=x+a^i(x)x_{\nh i}$.
\end{proof}

\section{Nearly-arbitrary positive spectra}\label{na} 
\setcounter{equation}{0}
Given $\,p\in(0,\infty)\,$ and an integer $\,m\ge3$, we denote by
$\,\dg\nh_m\w$ the space of all diagonal $\,m\times m\,$
matrices with real entries. Let us consider 
the set $\,\mathcal{X}_m\w$ of all ordered $\,(m+1)$-tuples 
$\,(\fh\nh,\gy_1\w,\dots,\gy_m\w)\,$ formed by a nonconstant periodic
$\,C^\infty$ function $\,\fh:\bbR\to\bbR\,$ of period $\,p\,$ and positive
real numbers $\,\gy_1\w,\dots,\gy_m\w$ such that, for some nonzero trace\-less
matrix $\,A\in\dg\nh_m\w$ and some $\,C^\infty$ curve 
$\,\bbR\ni t\mapsto B=B(t)\in\dg\nh_m\w$,
periodic of period $\,p$, one has
\begin{equation}\label{prs}
\dot B\hs\,+\,B^2\hs=\,\fh\hs\,+\,A\hskip8pt\mathrm{and}\hskip8pt
\mathrm{diag}\hs(\log\gy_1\w,\dots,\log\gy_m\w)\,
=\,-\hskip-4pt\int_{\nh0}^p\hskip-4ptB(t)\,dt\hh,
\end{equation}
where $\,(\hskip2.3pt)\hskip-2.2pt\dot{\phantom o}\nh=d/dt\,$ and $\,\fh\,$
stands for $\,\fh\,$ times $\,\mathrm{Id}\,$ or, equivalently, 
for $\,\mathrm{diag}\hs(\fh\nh,\dots,\fh\nh)$.

In the remainder of this section, we fix an integer $\,k\ge1\,$ and treat
real or ma\-trix-val\-ued functions of period $\,p\,$ as mappings with the
domain $\,S^1\nh$.
\begin{remark}\label{isomo}
If $\,c\in\bbR\smallsetminus\{0\}$, the linear operator 
$\,C^k\nh(S^1\nh,\bbR)\to C^{k-1}\nh(S^1\nh,\bbR)$ sending 
$\,y\,$ to $\,\dot y+cy\,$ is an isomorphism: its kernel consists of multiples
of $\,t\mapsto e^{-ct}\nh$, while solving the equation $\,\dot y+cy=u\,$ with
$\,u\in C^{k-1}\nh(S^1\nh,\bbR)\,$ for $\,y:\bbR\to\bbR\,$ gives us
$\,y(t+p)=y(t)+a\hh e^{-ct}\nh$, where $\,a\in\bbR$. Now 
$\,t\mapsto y(t)+(1-e^{-pc})^{-\nnh1}\nh a\hh e^{-ct}$ is the unique periodic
solution to $\,\dot y+cy=u$.
\end{remark}
\begin{theorem}\label{pospe}
If\/ $\,\gy_1\w,\dots,\gy_m\w$ satisfy\/ {\rm(\ref{pol})}, then\/
$\,(\fh\nh,\gy_1\w,\dots,\gy_m\w)\in\mathcal{X}_m\w$ for all\/ 
$\,\fh\,$ from some in\-fi\-nite-di\-men\-sion\-al manifold of\/ $\,C^\infty$ functions.
\end{theorem}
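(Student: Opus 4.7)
The plan is to apply Lemma~\ref{smthg} to a Riccati-type problem built around an explicit constant starting solution. Since $A$ and $B(t)$ are diagonal, the system (\ref{prs}) decouples into the scalar conditions $\dot b_i+b_i^2=\fh+a_i$ and $-\int_0^p\! b_i\,dt=\log\gy_i$, coupled only by $\sum_i a_i=0$ and the nontriviality $A\ne0$. Set $c_i=-(\log\gy_i)/p$, each nonzero by (\ref{pol}). A starting solution is then provided by the constant $\fh_0=\frac{1}{m}\sum_i c_i^2$, by $B_0\equiv\mathrm{diag}(c_1,\dots,c_m)$, and by $A_0=\mathrm{diag}(c_1^2,\dots,c_m^2)-\fh_0\mathrm{Id}$; condition (\ref{pol}) forbids the $|\log\gy_i|$ from all being equal, making $A_0\ne0$.

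Remark~\ref{isomo} applied componentwise shows that the linearized Riccati operator $\delta B\mapsto\delta\dot B+2B_0\delta B$, acting as $d/dt+2c_i$ in each entry, is an isomorphism on $C^k$ periodic functions (each $c_i\ne0$). The implicit function theorem then produces a unique periodic $C^k$ solution $B=B(\fh,A)$ to $\dot B+B^2=\fh+A$ for $(\fh,A)$ in a neighborhood of $(\fh_0,A_0)$ in $C^k(S^1,\bbR)\times\dg\nh_m\w$. Define
\[
\Phi(\fh,A)=\left(\int_0^p\! B(\fh,A)\,dt\,+\,\mathrm{diag}(\log\gy_1,\dots,\log\gy_m),\;\mathrm{tr}\,A\right)\in\dg\nh_m\w\times\bbR\hh,
\]
so that $\Phi(\fh_0,A_0)=0$ and the preimage $\Phi^{-1}(0)$ consists exactly of the $(\fh,A)$ yielding admissible triples. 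Averaging the linearization $\delta\dot b_i+2c_i\delta b_i=\delta\fh+\delta a_i$ over $[0,p]$ gives $\int_0^p\delta b_i\,dt=(2c_i)^{-1}(\overline{\delta\fh}+p\,\delta a_i)$ with $\overline{\delta\fh}=\int_0^p\delta\fh\,dt$; for any target $(D,T)\in\dg\nh_m\w\times\bbR$, the choice $\overline{\delta\fh}=(2\sum_i c_iD_i-pT)/m$ together with $\delta a_i=(2c_iD_i-\overline{\delta\fh})/p$ realizes it, proving $d\hh\Phi_{(\fh_0,A_0)}$ surjective.

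Lemma~\ref{smthg}, applied with $Q=S^1$, $\mathcal{X}=\dg\nh_m\w$, $\mathcal{Z}=\dg\nh_m\w\times\bbR$, $y=(\fh_0,A_0)$, and $z=0$, then provides a smooth retraction $\pi:\mathcal{U}\to\mathcal{U}\cap\Phi^{-1}(0)$ which preserves smoothness of the function component. The kernel $\ker d\hh\Phi_{(\fh_0,A_0)}=\{(\delta\fh,0):\int\delta\fh=0\}$ is an infinite-dimensional closed subspace, so $\Phi^{-1}(0)$ is locally a Banach submanifold of infinite dimension. The derivative of the map $\fh\mapsto\pi(\fh,A_0)$ at $\fh_0$ sends $\delta\fh$ to the projection $(\delta\fh-\overline{\delta\fh}/p,\,0)$, filling the whole kernel; hence its image is locally an infinite-dimensional manifold inside $\Phi^{-1}(0)$ on which the function component is smooth by the lemma. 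Restricting to the open conditions that $\fh$ be nonconstant and $A\ne0$ (both valid under small perturbation of the starting data, since $A_0\ne0$) leaves an infinite-dimensional manifold of $C^\infty$ functions $\fh$ with $(\fh,\gy_1,\dots,\gy_m)\in\mathcal{X}_m\w$.

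The main obstacle is the explicit verification of surjectivity of $d\hh\Phi_{(\fh_0,A_0)}$; everything else is a controlled perturbation around a constant starting solution. Surjectivity rests on Remark~\ref{isomo} (to invert the linearized Riccati equation) and on condition (\ref{pol}), which both forces $c_i\ne0$, making the averaging formula nonsingular, and forces $A_0\ne0$. Once that is established, Lemma~\ref{smthg} upgrades the Banach-manifold parametrization of $\Phi^{-1}(0)$ into the required manifold of $C^\infty$ solutions with no further effort.
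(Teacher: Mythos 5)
Your argument is correct and follows essentially the same route as the paper: the same constant starting solution $B_0=\mathrm{diag}(c_1,\dots,c_m)$ with $c_i=-(\log\gy_i)/p$ and $C^2=h+A$, the same use of Remark~\ref{isomo} to invert the linearized Riccati operator via the inverse/implicit function theorem, the same period-averaging computation to get surjectivity of the differential, and the same appeal to Lemma~\ref{smthg} to upgrade to smooth $\,\fh$. The only cosmetic differences are that you impose tracelessness of $A$ by adding a $\mathrm{tr}\,A$ component to $\varPhi$ rather than restricting the domain to traceless matrices, and you drop the entrywise exponentiation in the definition of $\varPhi$, which changes nothing since it is a local diffeomorphism.
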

\begin{proof}At any nonsingular 
$\hs C\nh=\hn\mathrm{diag}\hs(c_1\w,\dots,c_m\w)\in\dg\nh_m\w$
such that $\hs|\hh c_1\w\hn|,\dots,|\hh c_m\w\hn|$ are not all equal, 
viewed as a constant mapping $\,C:S^1\to\dg\nh_m\w$, the
$\,C^\infty$ mapping 
$\,\op:C^k\nh(S^1\nh,\dg\nh_m\w)\to
C^{k-1}\nh(S^1\nh,\dg\nh_m\w)\,$ with $\,\op(B)=\dot B+B^2$ has
the differential given by 
$\,d\hs\op\nh_C\w Y=\dot Y\nh+2CY$, which is an isomorphism 
(Remark~\ref{isomo}). Let $\,C^2\nh=\hs h+A$, with $\,h\,$ a (constant)
multiple of $\,\mathrm{Id}\,$ and (nonzero) trace\-less $\,A$. Due to the
inverse mapping theorem \cite[p.\ 13]{lang}, $\,\op\hs$ has a local
$\,C^\infty$ inverse from a $\,C^{k-1}\nh$-neigh\-bor\-hood of $\,h+A$ onto a 
$\,C^k\nh$-neigh\-bor\-hood of $\,C$. If $\,\fh\in C^{k-1}\nh(S^1\nh,\bbR)\,$
is $\,C^{k-1}\nh$-close to the constant $\,h$, and $\,E\in\dg\nh_m\w$ constant,
trace\-less as well as close to $\,A$, applying to $\,\fh+E\,$ this local
inverse followed
by the mapping $\,B\mapsto(\gy_1\w,\dots,\gy_m\w)\in\mr$ characterized by the
second part of (\ref{prs}), we get the composite 
$\,\fh+E\mapsto\varPhi(\fh+E)=(\gy_1\w,\dots,\gy_m\w)\in\mr$ 
of three mappings: first, the above local inverse of $\,\op\,$
(restricted to the set $\,\mathcal{U}\nh'\,$ of $\,\fh+E\,$ with $\,\fh\,$ near
$\,h\,$ and constant trace\-less $\,E\,$ near $\,A$), then the linear operator
$\,B\mapsto-\nnh\int_0^p\nnh B(t)\,dt\in\dg\nh_m\w$ and, finally,
the entrywise exponentiation of diagonal $\,m\times m\,$ matrices. 
The differential $\,d\hh\varPhi\nnh_{\hs h+A}\w$ is thus the composite
\begin{equation}\label{cmp}
\fh+E\,\mapsto\,Y\mapsto Z=-\hskip-4pt\int_{\nh0}^p\nnh Y(t)\,dt\,
\mapsto\,e^{-pC}\nnh Z 
\end{equation}
of the differentials of our three mappings, at the points $\,h+A,\,C\,$ and
$\,-\nnh\int_0^p\nh C\,dt=-p\hs C$. Note that the first arrow in (\ref{cmp})
sends $\,\fh+E\,$ to $\,Y\nh$ with $\,\dot Y\nh+2CY=\fh+E$, while the entrywise
exponentiation has at $\,-pC\,$ the differential
$\,Z\nh\mapsto e^{-pC}\nnh Z$. Integrating $\,\dot Y\nh+2CY=\fh+E\,$ from
$\,0\,$ to $\,p$, we obtain
$\,2C\hskip-3pt\int_0^p\nh Y(t)\,dt=\int_0^p\nh f(t)\,dt+pE$ due to
periodicity of $\,Y\nh$ and constancy of both $\,C\,$ and $\,E$, so that the
second arrow in (\ref{cmp}) takes $\,Y\nh$ to
$\,Z=-(2C)^{-\nnh1}\nnh[\int_0^p\nh f(t)\,dt+pE]$. 
Applying to this $\,Z\,$ the last arrow of (\ref{cmp}), we see that 
$\,d\hh\varPhi\nnh_{\hs h+A}\w(\fh+E)
=-e^{-pC}\nh(2C)^{-\nnh1}\nnh[\int_0^p\nh f(t)\,dt+pE]$, and so
$\,d\hh\varPhi\nnh_{\hs h+A}\w$ is manifestly 
surjective onto $\,\dg\nh_m\w$. The pre\-image $\,\varPhi^{-\nnh1}\nh(e^{-pC})\,$
is thus an in\-fi\-nite-di\-men\-sion\-al sub\-man\-i\-fold of the manifold
formed by our $\,\fh+E$, with the tangent space at $\,h+A\,$ equal to
$\,\mathrm{Ker}\,d\hh\varPhi\nnh_{\hs h+A}\w$, and hence consisting of all
$\,\fh+E\,$ with $\,E=0\,$ and $\,\int_0^p\nh f(t)\,dt=0$. See
\cite[p.\ 30]{lang}.

The hypotheses of Lemma~\ref{smthg} are now satisfied by the circle
$\,Q=S^1\nh$, the space $\,\mathcal{Z}\,$ of all diagonal $\,m\times m\,$
matrices, its sub\-space $\,\mathcal{X}\,$ consisting of trace\-less ones,
the points $\,y=h+A\,$ and $\,z=e^{-pC}\nh$, and our $\,\varPhi\,$ along with
its domain $\,\mathcal{U}\nh'\nh$, treated as a subset of
$\,C^k\nh(S^1\nh,\bbR)\times\mathcal{X}\,$ via the identification of each
$\,\fh+E\,$ with the pair $\,(\fh\nh,E)$. For the smooth retraction 
$\,\ret\,$ arising from Lemma~\ref{smthg}, $\,\ve\,$ near $\,0\,$
in $\,\bbR$, and any given $\,\fh\in C^\infty\nh(S^1\nh,\bbR)\,$ with 
$\,\int_0^p\nh f(t)\,dt=0$, the curve 
$\,\ve\mapsto\ret(h+\ve\fh,A)\,$ lies in the pre\-image
$\,\varPhi^{-\nnh1}\nh(e^{-pC})$, consists of func\-tion-ma\-trix pairs having
a smooth function component, and its velocity vector at $\,\ve=0\,$ is
$\,\fh\,$ (as one sees applying $\,d/d\ve\,$ and noting that the differential
of $\,\ret\,$ at $\,y=h+A\,$ equals the identity when restricted to the
tangent space of $\,\varPhi^{-\nnh1}\nh(e^{-pC})$). Since $\,\fh\,$ was just
any smooth function $\,S^1\to\bbR$ with $\,\int_0^p\nh f(t)\,dt=0$, such
curves realize the in\-fi\-nite-di\-men\-sion\-al manifold of $\,C^\infty$
functions named in our assertion. In addition, the curve
$\,\ve\mapsto\op^{-\nnh1}\nh(\ret(h+\ve\fh,A))$ consists of smooth 
ma\-trix-val\-ued functions $\,B\,$ due to regularity of solutions for the
differential equation $\,\dot B+B^2\nh=\fh+E$. 
Since $\,(\gy_1\w,\dots,\gy_m\w)=e^{-pC}$ was an arbitrary $\,m$-tuple
with (\ref{pol}), this completes the proof.
\end{proof}
While, as we just showed, condition (\ref{pol}) is sufficient for 
$\,(\gy_1\w,\dots,\gy_m\w)\,$ to lie in the image of $\,\mathcal{X}_m\w$ under
the mapping $\,(\fh\nh,\gy_1\w,\dots,\gy_m\w)\mapsto(\gy_1\w,\dots,\gy_m\w)$,
a weaker version of (\ref{pol}) is also {\it necessary\/} for it: this
version, allowing some $\,\gy_i\w$ to equal $\,1$, states that
$\,\{\gy_1\w,\dots,\gy_m\w\}\subseteq(0,\infty)\,$ does not have the form
$\,\{\gy\}\,$ or $\,\{\gy,\gy\nh^{-\nnh1}\nh\}\,$ with any $\,\gy>0$. To see its necessity,
write $\,B=(b_1\w,\dots,b_m\w)\,$ and $\,A=(a_1\w,\dots,a_m\w)$, so that the
first equality in (\ref{prs}) amounts to $\,\dot b_i\w\nh+b_i^2=\fh+a_i\w$ for
$\,i=1,\dots,m$. Next,
\begin{equation}\label{aij}
\mathrm{if\ }\,\gy_i\w\mathrm{\ equals\ }\,\gy_j\w\mathrm{\ or\
}\,\gy_j^{-\nnh1}\mathrm{\ for\ some\ distinct\ }\,i,j\mathrm{,\ then\
}\,a_i\w=a_j\w\hh.
\end{equation}
Namely, were this not the case, so
that $\,a_i\w\ne a\hn_j\w$, while the integrals from $\,0\,$ to $\,p$ of
$\,b_i\w$ and $\,b\hn_j\w$ are equal (or, opposite), cf.\ (\ref{prs}), the
difference $\,b_i\w\nh-b\hn_j\w$ (or, the sum $\,b_i\w\nh+b\hn_j\w$) would be the
derivative $\,\dot\chi\,$ of some periodic function $\,\chi\,$ and the
equality $\,(b_i\w\nh-b\hn_j\w)\hskip-2.2pt\dot{\phantom o}\nh+
(b_i\w\nh+b\hn_j\w)(b_i\w\nh-b\hn_j\w)=a_i\w-a\hn_j\w$ would give
$\,\ddot\chi=a_i\w-a\hn_j\w\,$ wherever $\,\dot\chi=0$ or, respectively, 
$\,[(b_i\w\nh-b\hn_j\w)e^\chi]\hskip-1.5pt\dot{\phantom o}\nh
=(a_i\w-a\hn_j\w)e^\chi$. As $\,a_i\w-a\hn_j\w$ is now a nonzero constant, in the
former case the critical points of $\,\chi\,$ would all be strict local 
maxima, or strict local minima, and in the latter
$\,(b_i\w\nh-b\hn_j\w)e^\chi\,$ would be strictly monotone, both of which
contradict periodicity, thus proving (\ref{aij}). 
Combining (\ref{aij}) with second equality in (\ref{prs}), we now see that
$\,|\nh\log\gy_1\w\hn|,\dots,|\nh\log\gy_m\w\hn|\,$ cannot be all equal,
as that would give $\,a_1\w=\ldots=a_m\w$, whereas $\,A\,$ in (\ref{prs}) is
nonzero and trace\-less.

\section{Proof of Theorem~\ref{maith}: existence}\label{po} 
\setcounter{equation}{0}
The argument presented in this section proves a special case of an assertion
established in \cite[Section 9]{derdzinski-roter-10}. For the reader's
convenience we chose  to proceed as below, rather than cite 
\cite{derdzinski-roter-10}, since this simplifies the exposition.

Existence in Theorem~\ref{maith} will follow once we show that, for suitable
$\,\fh,p,n,$ $\mv\nh,\lr,A\,$ with the properties listed at the beginning of
Section~\ref{mm}, where $\,n\ge5$ and the
metric signature of $\,\lr\,$ are {\it arbitrary}, and for
$\,\gp,\hm\,$ appearing in (\ref{ism}),
\begin{equation}\label{sgp}
\begin{array}{l}
\mathrm{some\ sub\-group\ }\,\Gm\subseteq\gp\,\mathrm{\ acts\ on\ 
}\,\hm\,\mathrm{\ freely\ and\ properly\ dis}\hyp\\
\mathrm{con\-tin\-u\-ous\-ly\ with\ a\ compact\ quotient\ manifold\
}\,M=\hm\nnh/\hh\Gm\nh.
\end{array}
\end{equation}
To choose $\,\fh,p,n,\mv\nh,\lr,A\,$ satisfying the conditions named in
Section~\ref{mm}, along with additional objects $\,\rc,B,\lz,\Lm,\gkp,\Gm\,$ 
needed for our argument, we let $\,n\ge5\,$ and 
$\,p,\rc\in(0,\infty)\,$ be completely arbitrary, and denote by $\,\lr\,$ a
pseu\-\hbox{do\hs-}Euclid\-e\-an inner product of any signature in
$\,\mv\nh=\bbR^{\hskip-.6ptn-2}\nnh$, making the standard basis orthonormal.
Lemma~\ref{polyn} allows us to fix a $\,\mathrm{GL}\hh(m,\bbZ)\,$ 
polynomial $\,P\nh$, where $\,m=n-2$, the complex roots
$\,\gy_1\w,\dots,\gy_m\w$ of which are all real, distinct, and satisfy
(\ref{pol}). Theorem~\ref{pospe}, for these $\,\gy_1\w,\dots,\gy_m\w$, yields
$\,\fh\nh,A\,$ and a curve
$\,t\mapsto B(t)\in\dg\nh_m\w\subseteq\text{\rm End}\hs(\mv)$, with
\begin{equation}\label{inf}
\mathrm{in\-fi\-nite}\hyp\mathrm{di\-men\-sion\-al\ freedom\ of\
choosing\ }\,\fh.
\end{equation}
Next, we let $\,\lz\,$ be the $\,(n-2)$-di\-men\-sion\-al vector space of
all solutions $\,u:\bbR\to\mv\hs$ to the differential equation 
$\,\dot u(t)=B(t)\hh u(t)$, with the {\it translation operator\/} 
$\,T:\lz\to\lz$ given by $\,(Tu)(t)=u(t-p)$. Note that
$\,\lz\subseteq\xe\hs$ for $\,\xe\hs$ which was defined in Section~\ref{mm}
along with a linear isomorphism $\,T:\xe\to\xe\nh$, and our $\,T\,$ is the restriction of that one to $\,\lz$. According to
\cite[Remark 4.2]{derdzinski-roter-10} and 
(\ref{prs}), our $\,T:\lz\to\lz\,$ has the spectrum 
$\,\gy_1\w,\dots,\gy_m\w$, so that $\,P\,$ is its characteristic polynomial 
and, by (\ref{glz}), $\,T(\Lm)=\Lm\,$ for some lattice $\,\Lm\,$ in $\,\lz$.
(As $\,\gy_1\w,\dots,\gy_m\w$ are all distinct, they uniquely determine the 
algebraic equivalence type of $\,T\nh$.) For later reference, let us also note
that
\begin{equation}\label{tko}
T^k\nh\ne\mathrm{Id}\hskip9pt\mathrm{for\ all\
}\,\,k\in\bbZ\smallsetminus\{0\}\hh,
\end{equation}
for $\,T:\lz\to\lz$, as its spectrum $\,\{\gy_1\w,\dots,\gy_m\w\}\,$ is
contained in $\,(0,\infty)\smallsetminus\{1\}$. Now
$\,\gkp=\{0\}\times\bbZ\rc\times\Lm\,$ is both a subset of
$\,\gp=\bbZ\times\bbR\times\xe\hs$ and a 
lattice in the vector space $\,\{0\}\times\bbR\times\lz\,$ while,
due to self-ad\-joint\-ness of each $\,B(t)$,
\begin{equation}\label{ouw}
\varOmega(u,w)\,=\,0\hskip7pt\mathrm{whenever\ }\,\,u,w\in\lz\hh.
\end{equation}
Finally, we denote by 
$\,\Gm$ the sub\-group of $\,\gp\,$ generated by $\,\gkp\,$ and the element 
$\,(1,0,0)$. Then $\,\Gm\nh$, as a subset of
$\,\gp=\bbZ\times\bbR\times\xe\nh$, equals $\,\bbZ\times\bbZ\rc\times\Lm$, and
\begin{equation}\label{acg}
\begin{array}{l}
\mathrm{each\ }\,\,(k,\ell\rc,u)\,\in\,\Gm\,
=\,\,\bbZ\times\bbZ\rc\times\Lm\,\,\mathrm{\ acts\,\ on\ 
}\hm\,=\,\rto\nh\times\mv\,\mathrm{\ by}\\
(k,\ell\rc,u)\cdot(t,s,v)
=(t+kp,\hs s+\ell\rc-\lg\dot u(t),2v+u(t)\rg,\hs v+u(t))\hs.
\end{array}
\end{equation}
In fact, due to (\ref{cnj}) and $\,T$-in\-var\-i\-ance of $\,\Lm$, the 
conjugation by $\,(1,0,0)\,$ maps $\,\gkp\,$ onto itself, and any element of 
$\,\Gm\nh$, being a finite product of factors from the set 
$\,\gkp\cup\{(1,0,0),(1,0,0)^{-1}\}$, {\it equals a power of\/ $\,(1,0,0)\,$
times an element of\/} $\,\gkp$. However, by (\ref{ope}), 
$\,(k,0,0)\cdot(\ell,0,0)=(k+\ell,0,0)$, and so $\,(1,0,0)^k\nh=(k,0,0)\,$ 
if $\,k\in\bbZ$. The last italicized phrase, combined with (\ref{ope}) 
and (\ref{act}), now yields (\ref{acg}).

The action (\ref{acg}) is free: if $\,(k,\ell\rc,u)\cdot(t,s,v)=(t,s,v)$, the
resulting equalities $\,kp=\ell\rc-\lg\dot u(t),2v+u(t)\rg=0\,$ and
$\,u(t)=0\,$ give $\,k=0$, while the first-order linear differential equation 
$\,\dot u=Bu\,$ implies that $\,u=0$, and so $\,\ell=0\,$ as well. 

In view of the reg\-u\-lar-de\-pend\-ence theorem for ordinary differential
equationsrr,
\begin{equation}\label{dif}
\bbR\times\lz\ni(t,w)\mapsto(t,w(t))\in\bbR\times\mv\,\mathrm{\ is\ a\
dif\-feo\-mor\-phism,}
\end{equation}
since that theorem guarantees smoothness of the inverse of (\ref{dif}). We 
now use Remark~\ref{stbil} to conclude that (\ref{acg}) is properly
dis\-con\-tin\-u\-ous: if the sequences
$\,(t\nh_j\w,s\nh_j\w,v\nh_j\w)\,$ and
$\,(k\nh_j\w,\ell\nnh_j\w\rc,u\nh_j\w)\cdot(t\nh_j\w,s\nh_j\w,v\nh_j\w)\,$
both converge, (\ref{acg}) gives convergence of $\,k\nh_j\w$ and
$\,u\nh_j\w(t\nh_j)\w$. The former makes $\,k\nh_j\w$ eventually constant, the
latter leads, by (\ref{dif}), to convergence of $\,u\nh_j\w$, and hence its
ultimate constancy (implying via (\ref{acg}) the same for $\,\ell\nnh_j\w$),
as $\,u\nh_j\w\in\Lm\,$ and the lattice $\,\Lm\subseteq\lz\,$ is discrete,
cf.\ (\ref{dsc}).

Finally, compactness of the quotient manifold $\,M=\hm\nnh/\hh\Gm\,$ in
(\ref{sgp}) follows since $\,\hm\,$ has a compact subset $\,K\,$ 
intersecting every orbit of $\,\Gm\nh$. Namely, we may set 
$\,K=\{(t,s,v):s\in[\hs0,\rc\hs]\,\mathrm{\ and\ }\,(t,v)\in K'\}$, where
$\,K'$ is the image under (\ref{dif}) of $\,[\hs0,p\hs]\times\hk$, with a
compact set $\,\hk\subseteq\lz\,$ chosen so as to intersect every orbit of the
lattice $\,\Lm\,$ acting on $\,\lz\,$ by vec\-tor-space translations. Any 
$\,(t,s,v)\in\hm\,$ can be successively modified by elements of $\,\Gm\hs$
acting on it, so as to eventually end up in $\,K\nh$. First, 
$\,(k,0,0)\in\Gm\hs$ with $\,kp\in[-t,p-t\hs]$, applied to $\,(t,s,v)$, 
allows us to assume that $\,t\in[\hs0,p\hs]$. For the pair $\,(t,w)\,$
arising as the pre\-im\-age under (\ref{dif}) of the $\,(t,v)$ component of
this new $\,(t,s,v)$, and suitably selected $\,u\in\Lm$, one has
$\,w+u\in\hk$, due to our choice of $\,\hk$. Now, by (\ref{acg}) with
$\,v=w(t)$,
\[
(0,0,u)\cdot(t,s,v)=(t,s-\lg\dot u(t),2v+u(t)\rg,\hs w(t)+u(t))\hh,
\]
that is, $\,(0,0,u)\cdot(t,s,v)=(t'\nh,s'\nh,v')\,$ for some
$\,(t'\nh,v')\in K'$ and $\,s'\nh\in\bbR$. Choosing $\,\ell\in\bbZ\,$ such
that $\,s'\nh+\ell\rc\in[\hs0,\rc\hs]$, we obtain
$\,(0,\ell\rc,0)\cdot(t'\nh,s'\nh,v')\in K$.

\section{Proof of Theorem~\ref{maith}: further conclusions}\label{pc} 
\setcounter{equation}{0}
For $\,\hm\nh,p\,$ and $\,\Gm=\hs\bbZ\times\bbZ\rc\times\Lm\,$ as in the last
section, the surjective sub\-mer\-sion $\,\hm=\rto\nh\times\mv\to\bbR\,$ 
sending $\,(t,s,v)\,$ to $\,t/p\,$ is, by (\ref{acg}), equi\-var\-i\-ant 
relative to the homo\-mor\-phism
$\,\Gm=\hs\bbZ\times\bbZ\rc\times\Lm\ni(k,\ell\rc,u)\to k\in\bbZ\,$ along with 
the actions of $\,\Gm\,$ on $\,\hm\,$ and $\,\bbZ\,$ on $\,\bbR$, so that it
descends to a surjective sub\-mer\-sion 
$\,M=\hm\nnh/\hh\Gm\to\bbR/\bbZ=S^1$ which, according to Remark~\ref{bndpr},
is a bundle projection. Since the homo\-mor\-phism $\,\Gm\to\bbZ\,$ has the
kernel $\,\gkp=\{0\}\times\bbZ\rc\times\Lm$, the fibre of this projection
$\,M\to S^1$ over the $\,\bbZ$-co\-set of $\,t/p\,$ may be identified
with the quotient $\,\hm\nnh_t\w/\gkp$, where 
$\,\hm\nnh_t\w=\{t\}\times\bbR\times\mv\nh$. Restricted to $\,\gkp\,$ and
$\,\hm\nnh_t\w$, (\ref{acg}) is given by
\begin{equation}\label{rst}
(0,\ell\rc,u)\cdot(t,s,v)
=(t,\hs s+\ell\rc-\lg\dot u(t),2v+u(t)\rg,\hs v+u(t))\hs,
\end{equation}
with fixed $\,t\in\bbR$. By (\ref{dif}), the restriction of (\ref{mpg}) to
$\,\{t\}\times\bbR\times\lz\,$ is a dif\-feo\-mor\-phism
onto $\,\hm\nnh_t\w$, and so its
$\,\gkp\nh$-equi\-var\-i\-ance, immediate from $\,\gp\nh$-equi\-var\-i\-ance,
means that, when we use it to identify $\,\hm\nnh_t\w$ with
$\,\{t\}\times\bbR\times\lz$, and hence also with
$\,\hp'\nh=\{0\}\times\bbR\times\lz\,$ (a sub\-group of $\,\gp\,$ containing
$\,\Sigma$), the restriction of (\ref{act}) to
$\,\hp'\nh\times\hm\nnh_t\w$ becomes the action of $\,\hp'$ on
itself via left translations. By (\ref{ope}) with $\,k=\ell=0$ and
(\ref{ouw}), $\,\hp'$ is an Abel\-i\-an sub\-group of $\,\gp$, and the
resulting group operation in $\,\hp'$ coincides with addition in the vector
space $\,\{0\}\times\bbR\times\lz\nh$. Since 
$\,\gkp=\{0\}\times\bbZ\rc\times\Lm$ is a lattice in
$\,\{0\}\times\bbR\times\lz$, cf.\ the lines preceding (\ref{ouw}), this
shows that the fibre $\,\hm\nnh_t\w/\gkp\,$ is a torus, which
makes $\,M\nh$, with the projection $\,M\to S^1$ described above, 
{\it a torus bundle over the circle}.

The torus bundle $\,M\to S^1$ is nontrivial: (\ref{tko}) combined with
\cite[Theorem 5.1(f)]{derdzinski-roter-10}, implies that $\,\Gm\,$ has no
Abel\-i\-an subgroup of finite index, so that $\,M\,$ cannot be
dif\-feo\-mor\-phic to a torus, or even covered  by a torus.

Geodesic completeness of $\,M\nh$, and its lack of local homogeneity, are
immediate from (\ref{gco}), while the claim about an
in\-fi\-nite-di\-men\-sion\-al moduli space of the lo\-cal-isom\-e\-try types
is an obvious consequence of Remark~\ref{infdm} and (\ref{inf}).



\begin{thebibliography}{99}

\bibitem{besse} A. L. Besse, \emph{Einstein Manifolds}, Ergeb. Math. Grenzgeb. (3) \textbf{10}, Springer, Berlin, 1987.
  
\bibitem{alekseevsky-galaev}D.\hskip2.3ptV\nh.\hskip2.3ptAlekseevsky and
A.\hskip2.3ptS.\hskip2.3ptGalaev, {\em Two-sym\-met\-ric Lo\-rentz\-i\-an 
manifolds}, J.\ Geom.\ Phys. \textbf{61} (2011), 2331--2340.

\bibitem{calvino-louzao-garcia-rio-seoane-bascoy-vazquez-lorenzo}E.\hskip2.3pt 
Calvi\~no-Louzao, E.\hskip2.3ptGarc\'\i a-R\'\i o, J.\hskip2.3ptSeoane-Bascoy
and R.\hskip2.3ptV\'azquez-Lorenzo, {\em Three-di\-men\-sion\-al
con\-for\-mal\-ly symmetric manifolds}, Ann.\ Mat.\ Pura Appl. (4)
\textbf{193} (2014), 1661--1670.

\bibitem{calvino-louzao-garcia-rio-vazquez-abal-vazquez-lorenzo}E.\hskip2.3pt 
Calvi\~no-Louzao, E.\hskip2.3ptGarc\'\i a-R\'\i o, M.\hskip2.3ptE.\hskip2.3pt
V\'azquez-Abal and R.\hskip2.3ptV\'azquez-Lorenzo, {\em Geometric properties 
of generalized symmetric spaces}, Proc.\ Roy.\ Soc.\ Edinburgh Sect.\ A
\textbf{145} (2015), 47-71.
  
\bibitem{chaki-gupta}M.\hskip2.3ptC.\hskip2.3ptChaki and B.\hskip2.3ptGupta, 
{\em On con\-for\-mal\-ly symmetric spaces}, Indian J.\ Math. \textbf{5}
(1963), 113--122.

\bibitem{derdzinski-78}A.\hskip2.3ptDerdzi\'nski, {\em
On homogeneous con\-for\-mal\-ly symmetric
pseu\-\hbox{do\hskip.7pt-}Riem\-ann\-i\-an manifolds}, Colloq.\ Math.
\textbf{40} (1978), 167--185.

\bibitem{derdzinski-roter-77}A.\hskip2.3ptDerdzi\'nski and W.\hskip2.3ptRoter,
{\em On con\-for\-mal\-ly symmetric manifolds with metrics of indices $\,0\,$
and $\,1\hh$}, Tensor (N.\hskip1.6ptS.) \textbf{31} (1977), 255--259.

\bibitem{derdzinski-roter-07}A.\hskip2.3ptDerdzinski and W.\hskip2.3ptRoter, 
{\em Global properties of indeﬁnite metrics with parallel Weyl tensor},
in: Pure and Applied Differential Geometry - PADGE 2007, eds.\ F.\ Dillen and
I.\ Van de Woestyne, Berichte aus der Mathematik, Shaker Verlag, Aachen, 2007,
63--72.

\bibitem{derdzinski-roter-08}A.\hskip2.3ptDerdzinski and W.\hskip2.3ptRoter, 
{\em On compact manifolds admitting indefinite metrics with parallel Weyl 
tensor}, J.\ Geom.\ Phys. \textbf{58} (2008), 1137--1147.

\bibitem{derdzinski-roter-09}A.\hskip2.3ptDerdzinski and W.\hskip2.3ptRoter, 
{\em The local structure of con\-for\-mal\-ly symmetric manifolds}, Bull.\
Belgian Math.\ Soc. \textbf{16} (2009), 117--128.

\bibitem{derdzinski-roter-10}A.\hskip2.3ptDerdzinski and W.\hskip2.3ptRoter, 
{\em Compact pseu\-\hbox{do\hskip.7pt-}Riem\-ann\-i\-an manifolds with
parallel Weyl tensor}, Ann.\ Glob.\ Anal.\ Geom. \textbf{37} (2010), 73--90.

\bibitem{derdzinski-terek}A.\hskip2.3ptDerdzinski and I.\hskip2.3ptTerek, {\em
The topology of compact rank-one ECS manifolds}, preprint (available 
from https:/\hskip-1pt/arxiv.org/pdf/2210.09195.pdf).

\bibitem{deszcz-hotlos}R.\hskip2.3ptDeszcz and M.\hskip2.3ptHotlo\'s, {\em
On a certain subclass of pseu\-do\-sym\-met\-ric manifolds}, Publ.\ Math.\
Debrecen \textbf{53} (1998), 29--48.

\bibitem{deszcz-glogowska-hotlos-torgasev-zafindratafa} R.\hskip2.3ptDeszcz,
M.\hskip2.3ptG{\l}ogowska, M.\hskip2.3ptHotlo\'s, M. Petrovi\'{c}-Torga\v{s}ev and G.\hskip2.3ptZafindratafa, \emph{A note on some generalized curvature tensor}, preprint, https:/\hskip-1pt/arxiv.org/pdf/2302.09387.pdf

\bibitem{deszcz-glogowska-hotlos-zafindratafa}R.\hskip2.3ptDeszcz,
M.\hskip2.3ptG{\l}ogowska, M.\hskip2.3ptHotlo\'s and 
G.\hskip2.3ptZafindratafa, {\em On some curvature conditions of 
pseu\-do\-sym\-me\-try type}, Period.\ Math.\ Hungar. \textbf{70} (2015),
153--170.

\bibitem{dundas}B.\hskip2.3ptI.\hskip2.3ptDundas, {\em A Short Course in 
Differential Topology}, Cambridge Mathematical Textbooks. Cambridge University 
Press, Cambridge, 2018.

\bibitem{hotlos} M. Hotlo\'{s}, \emph{Conformally symmetric warped products}, Ann. Acad. Pedagog. Crac. Stud. Math. \textbf{4} (2004), 75--85.
  
\bibitem{lang}S.\hskip2.3ptLang, {\em Differential and Riemannian Manifolds},
3rd ed., Graduate Texts in Mathematics \textbf{160}, Spring\-er-Ver\-lag, New 
York, 1995.

\bibitem{lee}J.\hskip2.3ptM.\hskip2.3ptLee, {\em Introduction to Topological  
Manifolds}, 2nd ed., Graduate Texts in Mathematics \textbf{202},
Spring\-er-Ver\-lag, New York, 2011.

\bibitem{leistner-schliebner}T.\hskip2.3ptLeistner and
D.\hskip2.3ptSchliebner, {\em
Completeness of compact Lo\-rentz\-i\-an manifolds with abel\-i\-an holonomy}, Math.\ Ann. \textbf{364} (2016), 1469--1503.
  
\bibitem{mantica-suh}C.\hskip2.3ptA.\hskip2.3ptMantica and
Y\nnh.\hskip2.3ptJ.\hskip2.3ptSuh, {\em Con\-for\-mal\-ly  symmetric
manifolds and quasi con\-for\-mal\-ly recurrent Riemannian manifolds}, Balkan
J.\ Geom.\ Appl. \textbf{16} (2011), 66--77.

\bibitem{mantica-molinari}C.\hskip2.3ptA.\hskip2.3ptMantica and
L.\hskip2.3ptG.\hskip2.3ptMolinari, {\em On con\-for\-mal\-ly recurrent
manifolds of dimension greater than 4}, Int.\ J.\ Geom.\ Methods Mod.\ Phys.
\textbf{13} (2016), 1650053, 17 pp.

\bibitem{olszak}Z.\hskip2.3ptOlszak, {\em On con\-for\-mal\-ly recurrent
manifolds, I\hs{\rm:} Special distributions}, Zesz.\ Nauk.\ Po\-li\-tech.\ 
\'Sl., Mat.-Fiz. \textbf{68} (1993), 213--225.

\bibitem{roter}W.\hskip2.3ptRoter, {\em On con\-for\-mal\-ly symmetric 
Ric\-ci-re\-cur\-rent spaces}, Colloq.\ Math. \textbf{31} (1974), 87--96.

\bibitem{suh-kwon-yang}Y\nnh.\hskip2.3ptJ.\hskip2.3ptSuh, J-H.\hskip2.3ptKwon
and H.\hskip2.3ptY\nnh.\hskip2.3ptYang, {\em Con\-for\-mal\-ly symmetric
sem\-i--Riem\-ann\-i\-an manifolds}, J.\ Geom.\ Phys. \textbf{56} (2006),
875-–901.

\bibitem{terek}I.\hskip2.3ptTerek, {\em Con\-for\-mal flat\-ness of compact 
three-di\-men\-sion\-al Cot\-ton--par\-al\-lel manifolds}, to appear in
Proc.\ Amer.\ Math.\ Soc.

\bibitem{tran} H. Tran, \emph{On closed manifolds with harmonic Weyl curvature}, Adv. Math. \textbf{322} (2017), 861--891.

\end{thebibliography}
\end{document}